\documentclass{amsart}

\usepackage{amsthm}
\usepackage{hyperref}

\DeclareMathOperator{\coh}{\mathrm{H}}

\DeclareMathOperator{\GU}{\mathrm{GU}}
\DeclareMathOperator{\End}{\mathrm{End}}
\DeclareMathOperator{\Nm}{\mathrm{Nm}}
\DeclareMathOperator{\NS}{\mathrm{NS}}
\DeclareMathOperator{\Pic}{\mathrm{Pic}}
\DeclareMathOperator{\SU}{\mathrm{SU}}
\DeclareMathOperator{\su}{\mathrm{su}}

\DeclareMathOperator{\SL}{\mathrm{SL}}
\DeclareMathOperator{\disc}{\mathrm{disc}}
\DeclareMathOperator{\sig}{\mathrm{sig}}

\theoremstyle{plain}
\newtheorem{proposition}[equation]{Proposition}

\newtheorem{theorem}[equation]{Theorem}

\newtheorem{lemma}[equation]{Lemma}

\theoremstyle{definition}
\newtheorem{definition}[equation]{Definition}

\theoremstyle{remark}

\newcommand\calS{\mathcal{S}}
\newcommand\Sbar{\bar{\calS}}

\title[Discriminants]{Discriminants of Hermitian forms, maximal degenerations and Kontsevich's tropical approach to the Hodge conjecture}
\author[Brosnan]{Patrick Brosnan}
\address{Department of Mathematics\\
  University of Maryland\\
  College Park, MD USA}
\email{pbrosnan@umd.edu}

\begin{document}
\begin{abstract}
Motivated by Markman's work and by Kontsevich's tropical approach to disproving
the Hodge conjecture, 
this note explains why families of Weil type abelian $2n$-folds only 
have maximal unipotent degeneration when the discriminant is $(-1)^n$.
Along the way, it explains what the discriminant is and collects 
various facts about Weil type abelian varieties.
\end{abstract}

\maketitle

\section{Introduction}
\label{intro}
The main goal of this note is to point out the connection between the
discriminant of a Hermitian form $H$ of signature $(n,n)$ over an imaginary
quadratic field $\mathbb{Q}(\sqrt{-d})$ and the possible degenerations of
families of Weil type abelian varieties with associated  Hermitian form $H$.  
As I show in Proposition~\ref{p.main}, maximally degenerate families can occur
only when the discriminant of $H$ is $(-1)^n$.

The motivation for making this observation has to do with two
relatively recent developments: 
(i) M.~Kontsevich's tropical approach
to the Hodge conjecture~\cite{zharkov2020tropical} and 
(ii) E.~Markman's proof~\cite{markmanweilhodge} of the Hodge
conjecture for Weil type cycles on Weil abelian $4$-folds of
discriminant along with his more recent
proof~\cite{markmanCyclesAbelian2nfolds2025} of the Hodge conjecture for all abelian $4$-folds and for Weil
type abelian $6$-folds of discriminant $-1$. 

(i) is a program initiated by Kontsevich to look for counterexamples
to the usual Hodge conjecture via a related, but, by nature, more
combinatorial, and, therefore, hopefully more amenable to computation,
tropical Hodge conjecture.  
I.~Zharkov's preprint~\cite{zharkov2020tropical} writes down some of
the details of this program and how it might have 
conceivably worked in the
context of Weil type abelian $4$-folds. 

As stated in the
first paragraph of~\cite{zharkov2020tropical}, Kontsevich's
program requires a maximal degeneration of $g$-dimensional abelian varieties to
get a $g$-dimensional tropical abelian variety. 
However, by Proposition~\ref{p.main}, the only moduli spaces of Weil type
abelian varieties which admit such
maximal degenerations are those with discriminant $(-1)^{g/2}$. 
(Note that Weil type abelian varieties are always even dimensional.)

In April of 2023, when the first version of this note was written, 
Markman had proved the Hodge conjecture for Weil type abelian 
$4$-folds of discriminant $1$~\cite{markmanweilhodge}.  
So, by the above argument, Proposition~\ref{p.main} showed that 
Kontsevich's method could not disprove the Hodge conjecture for 
Weil type abelian $4$-folds.
Now, Markman has proved the Hodge conjecture for \emph{all} abelian
$4$-folds~
\cite{markmanCyclesAbelian2nfolds2025}.
Given this new development, this note has nothing interesting to say for
abelian $4$-folds.
However, also in~\cite{markmanCyclesAbelian2nfolds2025}, Markman
proved the Hodge conjecture for Weil type abelian $6$-folds of
discriminant $-1$.
The results of this note then imply that Kontsevich's method
cannot disprove the Hodge conjecture for Weil type abelian $6$-folds.

It is worth  pointing out
explicitly that Zharkov's preprint~\cite{zharkov2020tropical} does not, in
fact, claim to produce a counterexample to the Hodge conjecture.
Rather, the preprint ends inconclusively with what is essentially an
unsuccessful computer computation aimed at disproving a tropical form of the
Hodge conjecture.

Most of the work done in writing this note involved collecting facts about
(a) Weil type abelian varieties and (b) Hermitian forms. 
In \S\ref{s.weiltype}, I summarize results about Weil type abelian varieties, mostly
taken from a paper of van Geemen~\cite{vanGeemen94hodge}.
In~\S\ref{s.shimura}, I  summarize more results from van Geemen's paper
about the Shimura varieties arising as moduli spaces of Weil type abelian
varieties.
As it turned out, I did not really wind up needing that section
anywhere. 
But I want to keep it because I think it is conceptually useful. 
Although it is not the point of view taken in this note, 
probably the most efficient way to think about the results of this
note is in terms of Shimura varieties associated to groups $G$
with $G_{\mathbb{R}} = \GU(n,n)$, where $\GU(n,n)$ is the 
general unitary group associated to a Hermitian form $H$ of 
signature $(n,n)$. 
From that point of view, the point is that $\GU(n,n)$ is 
quasisplit if and only if the descriminant of $H$ is $(-1)^n$.
Moreover, $\GU(n,n)$ being quasisplit is equivalent to the 
existence of a maximal unipotent degeneration.

In~\S\ref{s.main}, I state Proposition~\ref{p.main}, which is the main
observation of this note.
If you take $n=2$ in the Proposition, you get that Weil type abelian
$4$-folds of discriminant not equal to $1$ cannot have maximal degenerations
in the sense used by Zharkov in~\cite{zharkov2020tropical}.
Similarly, if you take $n=3$, you get that Weil type abelian $6$-folds
of discriminant $-1$ cannot have maximal unipotent degenerations.

Section~\ref{s.herm} collects several (very standard) facts about Hermitian
forms, many of which are used 
in the proof of Proposition~\ref{p.main}.  
In \S\ref{s.proof}, I  prove Proposition~\ref{p.main}.
Finally, I end with a short conclusion~\S\ref{s.conclusion}, where I point
out that, while you don't get maximal degenerations of the kind needed
in~\cite{zharkov2020tropical}, you do get degenerations that are essentially
``one less than maximal." 
(So, for example, you can always get a family of Weil type abelian $4$-folds
to degenerate to a $2$-dimensional tropical torus, even if they won't always
degenerate to a $4$-dimensional torus.)

\subsection{Acknowledgments}  
This note started out as a letter to Helge Ruddat about M.~Kontsevich's talk  
``Motives beyond geometry" given on April 25, 2023 
over Zoom for the Concluding Conference of Simons Collaboration on Homological Mirror Symmetry
at the Simons Institute.
The first version of the note was written at the conference itself.
This was well before Markman's recent work on the Hodge conjecture 
for Weil type abelian $6$ folds of disriminant $-1$.

I thank the Simons Institute, the organizers of the conference, and Kontsevich
for the interesting talk.
I also thank Helge Ruddat for the very interesting conversations we had
after the talk.
We had initially viewed this note as one part of a joint project, but eventually 
we decided to separate it out.
I thank N.~Fakhruddin, B.~ van
Geemen,  and I.~Zharkov for reading earlier versions of the note and responding with 
valuable suggestions and comments. 
I also thank E.~Markman, who was, incidentally, also at the Simons Institute
conference, for his help and encouragement both at the conference
itself and afterwards.

\section{Weil type Abelian varieties}
\label{s.weiltype}
We first need to recall a little bit
about Weil type Abelian varieties and the Shimura varieties that
parametrize them.
For this,  I follow van Geemen's paper~\cite{vanGeemen94hodge}.

Suppose $X$ is a complex abelian variety. 
Write $\End X$ for the ring of endomorphisms of $X$ as an algebraic group.
For each $\varphi\in\End X$, we get a linear transformation 
$d\varphi_0\in \End_{\mathbb{C}} T_0 X$, where $T_0 X$ is the fiber of the sheaf of holomorphic 
tangent vectors over the point $0\in X$. 
We wind up getting a ring homomorphism $t:\End(X)\otimes\mathbb{Q}\to \End_{\mathbb{C}} T_0 X$.

The data defining a Weil type abelian variety is a pair $(X,K)$, where $X$ is a  
$2n$-dimensional abelian variety and $K$ is a subalgebra 
of $\End(X)\otimes\mathbb{Q}$ isomorphic to an imaginary quadratic extension of $\mathbb{Q}$.
Fix an embedding $K\hookrightarrow\mathbb{C}$.
Then I say that $(X,K)$ is an \emph{abelian variety of Weil type} if, for each $\alpha\in K$,
the transformation $t(\alpha)$ is conjugate to the 
diagonal $2n\times 2n$ matrix with $n$ eigenvalues $\alpha$ and $n$ eigenvalues
$\bar\alpha$. (See~\cite[Definition
4.9]{vanGeemen94hodge}, and note that this definition does not depend on the choice of embedding 
of $K$ in $\mathbb{C}$.)

Suppose $(X,K)$ is a Weil type abelian variety.
Then for $\alpha\in K$, we get a map $\alpha^*:\coh^*(X,\mathbb{Q})\to \coh^*(X,\mathbb{Q})$.
The map $\alpha\mapsto\alpha^*$ is a homomorphism of multiplicative monoids,
and it gives $\coh^1(X,\mathbb{Q})$ the structure of a $K$-vector 
space~\cite[\S5.1]{vanGeemen94hodge}. 
(On the other hand, it's easy to see that it doesn't give all of $\coh^*(X,\mathbb{Q})$ a 
$K$-vector space structure: just think about $\coh^0(X,\mathbb{Q})$.)

Since $K$ is an imaginary quadratic extension of $\mathbb{Q}$, we have $K=\mathbb{Q}(\sqrt{-d})$ for some 
positive integer $d$. 
Write $\NS X$ for the N\'eron-Severi group of $X$.
This is the image of the map $\Pic X\to \coh^2(X,\mathbf{Z}(1))$ from the Picard group of line bundles
on $X$ to the cohomology of $X$ with coefficients in the group $\mathbb{Z}(1)=2\pi i\mathbb{Z}$ taking a line
bundle $\mathcal{L}$ to its first Chern class $c_1\mathcal{L}$.
A \emph{polarized Weil type abelian variety} is then a triple $(X,K,E)$ where $(X,K)$ is a Weil type
abelian variety and $E\in \NS X$ is the image of an ample line bundle with the property that 
\begin{equation}
  \label{e.pol}
  (\sqrt{-d})^*E = dE. 
\end{equation}
Note that, for any $\alpha\in\mathbb{Q}$, we have $(\alpha)^*=\alpha^i$ on $\coh^i(X,\mathbb{Q})$.
From this, it follows easily that the definition of polarization via \eqref{e.pol} is independent of the choice of $d$.

In \cite[Lemma 5.2]{vanGeemen94hodge}, van Geemen gives some important facts
about polarized Weil type abelian varieties $(X,K,E)$.
One crucial fact for what follows is the existence of a canonical Hermitian form 
\begin{equation}
  \label{e.Herm}
  H: \coh_1(X,\mathbb{Q})\times \coh_1(X,\mathbb{Q})\to K
\end{equation}
given by $H(x,y) := E(x,(\sqrt{-d})_*y) + \sqrt{-d}E(x,y)$.

Write $\SU_H$ for the special unitary group over $\mathbb{Q}$ associated to $H$,
and write $V:=\coh_1(X,\mathbb{Q})$. 
The set of $\mathbb{Q}$-valued points of  $\SU_H$ is 
\begin{equation}
  \label{e.suhq}
  \SU_H(\mathbb{Q})=\{T\in\SL(V):\text{for all $v,w\in V$, } H(Tv,Tw)=H(v,w)\}. 
\end{equation}

By~\cite[Lemma 5.2 (4)]{vanGeemen94hodge}, the form $H$ has signature $(n,n)$. 
In other words, if we base-change $H$ to a Hermitian form on $V\times_K \mathbb{C}\cong \mathbb{C}^{2n}$, then it is isometric
to the form $H(z, w) = \sum_{i=1}^n \bar z_i w_i - \sum_{i=n+1}^{2n} \bar z_i w_i$.
So 
the base-change $\SU_{H,\mathbb{R}}$ of $\SU_H$ to $\mathbb{R}$ is the group $SU(n,n)$ associated to 
Hermitian form on $\mathbb{C}^{2n}$ of signature $(n,n)$.  
Moreover,  by~\cite[Lemma 6.10]{vanGeemen94hodge}, we have
$\SU_{H,\mathbb{C}}\cong \SL_{2n,\mathbb{C}}$.
One compact way of saying this is to say that $\SU_{H,\mathbb{R}}$ is the
unique (up to isomorphism) quasi-split but not split simply connected algebraic
group over $\mathbb{R}$ of type $A_{2n-1}$. 

\section{Weil type PEL Shimura varieties}
\label{s.shimura}
Now, to get a maximal degeneration, we need a family of varieties.  
And, fortunately, the Weil type abelian varieties $(X,K)$ of dimension $2n$ are
parametrized by locally symmetric spaces of dimension $n^2$.
In other words, they live in families of dimension $n^2$. 

In~\cite[Section 5]{vanGeemen94hodge}, van Geemen explains this. 
The upshot is the following. 
Write $G=\SU_H$. 
Then, as above, $G_{\mathbb{R}}\cong SU(n,n)$, and there is a compact subgroup
$K\cong S(U(n)\times U(n))$ of $G(\mathbb{R})$, where $S(U(n)\times
U(n))=\{(A,B)\in U(n)\times U(n): \det AB=1\}$. 
If $\Lambda$ is a lattice in $V$, then write $\Gamma=\Gamma_{\Lambda}$ 
for the set of all $T\in \SU_{H}$ such that $T\Lambda =\Lambda$. 
(In~\cite{vanGeemen94hodge}, van Geemen writes ``$T\Lambda\subseteq\Lambda$'' in~\cite{vanGeemen94hodge}, but this is equivalent to the 
condition that $T\Lambda=\Lambda$, which makes it more obvious that $\Gamma$ is a subgroup.)
Then, as in \cite[\S5.11]{vanGeemen94hodge}, 
$\mathcal{H}:=\Gamma\backslash\SU_{H}(\mathbb{R})/K$ 
has the structure of a quasi-projective variety of dimension $n^2$, which parametrizes $2n$-dimensional abelian varieties of  
polarized Weil type.
Moreover, every polarized Weil type abelian variety appears in such a family $\mathcal{H}$ for some polarization 
$H$.
Informally, the space $\mathcal{H}$ is sometimes called a ``Shimura variety." 
(However, note that this does not agree with Deligne's official
definition of what a Shimura variety is~\cite[\S
2.1.2]{deligneVarietesShimuraInterpretation1979}.)

To avoid having too many things labeled by the same letter, let's write $\mathcal{S}(H,\Gamma):=
\Gamma\backslash\SU_{H}(\mathbb{R})/K$ for the quasi-projective variety $\mathcal{H}$ above.
If $\Gamma'\leq \Gamma$ is a finite index subgroup, then I get a finite morphism
$\mathcal{S}(H,\Gamma')\to \mathcal{S}(H,\Gamma)$.
Moreover, if $\Gamma$ is sufficiently small, there is a universal family 
of Weil type abelian $2n$-folds over $\mathcal{S}(H,\Gamma)$.
In particular, for $\Gamma$ sufficiently small, $\mathcal{S}(H,\Gamma)$ is a fine moduli space, and we get a universal family $\mathcal{A}\to\mathcal{S}(H,\Gamma)$
of Weil type polarized abelian varieties.

In fact, there's an explicit condition on $\Gamma$, which guarantees that $\mathcal{S}(H,\Gamma)$ is 
a fine moduli space. 
That is that $\Gamma$ be what's called neat.
In~\cite[\S17.1]{borarg}, Borel says what it means for a subgroup to be neat 
(or \emph{net} in French)
and, in~\cite[\S17.4]{borarg},  
he shows that sufficiently small arithmetic subgroups are neat.

What kind of degenerations the universal family $\mathcal{A}\to \mathcal{S}(H,\Gamma)$ can have is completely 
determined by the group $G=\SU_H$.
To explain this properly, I have to talk about the Baily-Borel compactification $\bar{\mathcal{S}}=\bar{\mathcal{S}}(H,\Gamma)$ of 
$\mathcal{S}=\mathcal{S}(H,\Gamma)$. 
The Baily-Borel compactification  $\Sbar$ is a normal projective algebraic variety containing 
$\calS$ as a dense open subset. 
It is stratified by locally closed subvarieties, and the strata are in one-one correspondence with 
$\Gamma$-conjugacy classes of maximal parabolic subgroups of $G$.
(To be completely explicit, the strata are in one-one correspondence with
$\Gamma$-conjugacy classes of parabolic subgroups $P<G$ which are defined over
$\mathbb{Q}$ and maximal among proper parabolic subgroups of $G$ defined over $\mathbb{Q}$.)

In~\S\ref{s.main} below, I'll analyze the degenerations of Weil type abelian varieties enough for the purposes of this 
note without using the structure of $\Sbar$.
But understanding $\Sbar$ would be useful for getting a more precise understanding of the degenerations that exist.

\section{The main observation}
\label{s.main}

\begin{proposition}
  \label{p.main}
  Suppose $K/\mathbb{Q}$ is an imaginary quadratic field, $V\cong K^{2n}$ 
  is an even-dimensional $K$-vector space and $H$ is a nondegenerate
  hermitian form on $V$.  Consider the following statements:
  \begin{enumerate}
    \item\label{lag} there is an $n$-dimensional $K$-linear subspace $W\subseteq V$ with 
      $H(x,y)=0$ for all $x,y\in W$. 
    \item\label{disc} the discriminant of $H$ is $(-1)^n$ and the signature of $H$ is $(n,n)$.
    \item\label{degen} there is a maximal degeneration $A\to B$ of Weil type abelian varieties 
      with hermitian form isometric to $H$.
  \end{enumerate}
  Then \eqref{degen} $\Rightarrow$ \eqref{lag} $\Leftrightarrow$ \eqref{disc}.
\end{proposition}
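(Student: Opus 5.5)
I would prove the two implications separately, treating \eqref{lag} $\Leftrightarrow$ \eqref{disc} as pure linear algebra over $K$ and \eqref{degen} $\Rightarrow$ \eqref{lag} as the geometric input. Throughout, the discriminant of $H$ is taken to be $\det(H(e_i,e_j))$ for a $K$-basis $e_1,\dots,e_{2n}$, viewed in $\mathbb{Q}^\times/\Nm K^\times$. This is well defined because changing basis by $g$ multiplies the determinant by $\det g\,\overline{\det g}=\Nm(\det g)$.

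For \eqref{lag} $\Rightarrow$ \eqref{disc}, I start from a totally isotropic $n$-dimensional subspace $W$. Pick a basis $w_1,\dots,w_n$ of $W$. Nondegeneracy gives $W^\perp=W$, so the pairing $V/W\times W\to K$ induced by $H$ is perfect. Hence there is a complementary isotropic $W'$ with a basis $w_1',\dots,w_n'$ dual to the $w_i$ (the standard hyperbolic-splitting argument; one has to correct by $\tfrac12$ of the self-pairings, which is fine in characteristic $0$). In this basis the Gram matrix is $\begin{pmatrix}0&I\\ I&0\end{pmatrix}$, whose determinant is $(-1)^n$. Each hyperbolic plane $\langle w_i,w_i'\rangle$ has signature $(1,1)$ over $\mathbb{R}$, so the signature of $H$ is $(n,n)$.

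For \eqref{disc} $\Rightarrow$ \eqref{lag}, I use the classification of Hermitian forms over a number field by Landherr's theorem: such forms are determined up to isometry by dimension, discriminant in $\mathbb{Q}^\times/\Nm K^\times$, and signatures at the real places. The split form $n\langle 1,-1\rangle$ has dimension $2n$, discriminant $(-1)^n$ and signature $(n,n)$, so it is isometric to $H$ under hypothesis \eqref{disc}. That split form visibly has an $n$-dimensional isotropic subspace. If one prefers to avoid quoting Landherr, the alternative is an induction: use Hasse--Minkowski for the trace form $\mathrm{tr}_{K/\mathbb{Q}}H$ (dimension $4n\ge 5$ once $n\ge2$, indefinite) to find an isotropic vector, split off a hyperbolic plane, and note that discriminant and signature pass to the orthogonal complement as $(-1)^{n-1}$ and $(n-1,n-1)$. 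The base case $n=1$ is handled directly: a binary form $\langle a,b\rangle$ is isotropic iff $-ab\in\Nm K^\times$, i.e.\ iff the discriminant is $-1$.

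For \eqref{degen} $\Rightarrow$ \eqref{lag}, a maximal degeneration gives a local system $\coh_1$ over a punctured polydisc with unipotent monodromies $N_j$ commuting with $K$ and preserving $E$, hence lying in the unitary Lie algebra of $H$. By maximality, the weight filtration $W_\bullet$ of $N=\sum N_j$ has $\mathrm{Gr}^W_{-1}=0$, so $W_{-2}$ has rank $2n$ over $\mathbb{Q}$. Since $W_{-2}=\mathrm{im}\,N$ is $K$-stable, it is an $n$-dimensional $K$-subspace. Because $N^2=0$ and $E(Nx,y)=-E(x,Ny)$, we get $E(Nx,Ny)=-E(x,N^2y)=0$, so $W_{-2}$ is $E$-isotropic. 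Being also $K$-stable, it is $H$-isotropic, since $H$ is built from $E$ and $(\sqrt{-d})_*$. Monodromy is defined over $\mathbb{Q}$ (after passing to a unipotent finite cover), so $W_{-2}$ is a rational $K$-subspace, which gives \eqref{lag}.

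I expect the main obstacle to be pinning down precisely what ``maximal degeneration'' means, so that the rank and rationality of $\mathrm{im}\,N$ are justified. The Hermitian classification step is standard once Landherr is cited.
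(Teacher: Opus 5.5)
Your proposal is correct and follows essentially the paper's route. For \eqref{degen}$\Rightarrow$\eqref{lag} you take $W=\mathrm{im}\,N$, which is isotropic because $N$ is skew for the form and $N^2=0$ (you argue via $E$ and $K$-stability where the paper uses $H$ directly), and for \eqref{disc}$\Rightarrow$\eqref{lag} you use Landherr's theorem to identify $H$ with $\mathbb{H}^n$. The only differences are minor: in \eqref{lag}$\Rightarrow$\eqref{disc} you build the whole hyperbolic basis at once from $W^\perp=W$ instead of splitting off one plane at a time via Lemma~\ref{l.iso} and inducting, and your optional Hasse--Minkowski induction (Meyer's theorem on the indefinite $4n$-dimensional $\mathbb{Q}$-form $x\mapsto H(x,x)$, plus the binary base case) is a valid self-contained substitute for Landherr.
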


In fact, all of the conditions above are equivalent, but I don't need that to 
establish the main point of this note (that having a maximal degeneration
for Abelian $4$-folds of Weil type implies having discriminant $(-1)^n$).
I'll prove Proposition~\ref{p.main} in \S\ref{s.proof} below 
after explaining some points about Hermitian forms in \S\ref{s.herm}. 
But now, I want to be a little bit more explicit about what the items mean.

The meaning of \eqref{lag} is obvious.  
We'll call such a $W$ if it exists a \emph{lagrangian subspace} of $V$.

To explain what the discriminant of $H$ means in  
Proposition~\ref{p.main}~\eqref{disc}, pick a $K$-basis $v_1,\ldots, v_{2n}$ of
$V$, and use it to write $H$ as a Hermitian matrix $a_{ij}$.
Explicitly, $H(v_i,v_j)=a_{ij}$.
Write $A=(a_{ij})$. 
Then $\det A\in\mathbb{Q}^{\times}$, and it is well-defined (independent of the choice of basis) 
modulo $\Nm(K^{\times})$, where $\Nm:K\to\mathbb{Q}$ 
is the norm map $c\mapsto c\bar c$. 
By definition, the \emph{discriminant} of $H$ is $\det A\in \mathbb{Q}^{\times}/\Nm(K^{\times})$.
(See~\cite[\S 4.4]{vangeemen2022fourfolds}.)
Let's write $\disc H$ for the discriminant of $H$ or $\disc V$ for the discriminant of $H$ on $V$ if $H$ 
is fixed.

Finally, by a maximal degeneration, I mean a family $A\to B$ of Weil type abelian varieties  
such that the following conditions hold: 
\begin{enumerate}
  \item $B$ is a smooth curve included as a Zariski open subset in a smooth, connected, complex curve $\bar B$ with 
    $\bar B\setminus B =\{b_0\}$ a single closed point;
  \item $A\to B$ is smooth, projective and every fiber is a Weil type abelian variety;
  \item\label{isom} there is a Weil type polarization $E$ on the family such that the associated hermitian form $H_E$ on $\coh_1(A_b,\mathbb{Q})$
    for some (hence any) $b\in B$ is isometrically equivalent to $H$;
  \item\label{maxup} if we let $N$ denote the logarithm of the unipotent part of the monodromy of the family $A\to B$ about $b_0$ 
    acting on $\coh_1(A_b,\mathbb{Q})$ for some $b\in B$, then $N$ has $2n$ Jordan blocks of size $2$.
    Or equivalently, $N$ has rank $2n$ over $\mathbb{Q}$.
\end{enumerate}

It's really condition~\eqref{maxup} that has to do with the maximal unipotent conditions.
And this is the condition that is required in the first paragraph of Zharkov's paper 
for the real dimension of the tropical abelian variety to be equal to the complex dimension of
the Weil type abelian varieties~\cite{zharkov2020tropical}.
(The key phrase there is that ``half the $1$-cycles vanish.")

By the Monodromy Theorem~\cite[Theorem 6.1]{schmid73}, since we are talking about a
family of abelian varieties, we have $N^2=0$.
So all Jordan blocks of $N$ are either of dimension $1$ or $2$.
This explains why the condition on the Jordan blocks is equivalent to the condition on the rank of $N$ as a 
$\mathbb{Q}$-linear transformation.

The rest of the conditions are just there to 
ensure that we're talking about the right kind of varieties.  

With the exception of the definition of isometry of Hermitian forms, this
explains the meaning of the terms in Proposition~\ref{p.main} (3). 
I'll give the precise definition of isometry  in
\S\ref{s.herm} below.

\subsection{Landherr's theorem}
In the 1930s, Landherr~\cite{landherr1935formen} proved (a much more general
form of) the following theorem.

\begin{theorem}[Landherr]
  \label{t.landherr}
   A nondegenerate hermitian form over $K$ is determined up to isometry by the dimension of its underlying space, its signature
   and its discriminant.
\end{theorem}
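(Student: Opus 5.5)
The plan is to prove Theorem~\ref{t.landherr} by a local--global argument, proceeding by induction on $m=\dim_K V$. The special case needed here, $K$ imaginary quadratic over $\mathbb{Q}$, is the only one I will treat. First, every nondegenerate hermitian form can be diagonalized as $\langle a_1,\ldots,a_m\rangle$ with $a_i\in\mathbb{Q}^\times$, since $H(x,x)\in\mathbb{Q}$ for all $x$. The base case $m=1$ is immediate: $\langle a\rangle\cong\langle b\rangle$ if and only if $a/b\in\Nm(K^\times)$, which is exactly equality of discriminants. The invariants are clearly isometry invariants, so the content is the converse.

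\emph{Local step.} At each place $v$ of $\mathbb{Q}$, I classify hermitian forms over $K_v=K\otimes\mathbb{Q}_v$.
\begin{itemize}
\item If $v$ splits in $K$, then $K_v\cong\mathbb{Q}_v\times\mathbb{Q}_v$ and the norm map is surjective onto $\mathbb{Q}_v^\times$. Every form $\langle a_1,\ldots,a_m\rangle$ is then isometric to $\langle 1,\ldots,1\rangle$, so only the dimension matters.
\item If $v=\infty$, then $K_v=\mathbb{C}$ and Sylvester's law shows that the signature is a complete invariant.
\item If $v$ is finite and nonsplit, then $\Nm(K_v^\times)$ has index $2$ in $\mathbb{Q}_v^\times$. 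The key fact is that a binary form $\langle a,b\rangle$ represents every $c\in\mathbb{Q}_v^\times$. This holds because $a\Nm(K_v^\times)+b\Nm(K_v^\times)$ meets both norm cosets; concretely, the associated quaternary quadratic form $\langle a,-c\rangle\otimes\langle 1,d\rangle$ is isotropic for a suitable choice. So any form of dimension $\geq 2$ can be written as $\langle 1\rangle\perp H_1$. Induction then shows that the form is determined by its dimension and its discriminant in $\mathbb{Q}_v^\times/\Nm(K_v^\times)$.
\end{itemize}
Consequently, if $H$ and $H'$ over $K$ have the same dimension, signature and discriminant, they are isometric over every $K_v$. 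Here equality of the global discriminants mod $\Nm(K^\times)$ gives equality of the local ones.

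\emph{Global step.} Associate to $H$ the quadratic form $q_H(x)=H(x,x)$ on $V$ regarded as a $4$-dimensional (for $m=2$), or in general $2m$-dimensional, $\mathbb{Q}$-space; explicitly $q_H\cong\langle 1,d\rangle\otimes\langle a_1,\ldots,a_m\rangle$. $H$ represents $c\in\mathbb{Q}^\times$ if and only if $q_H$ does, and the same holds over each $\mathbb{Q}_v$. Now take $c=a_1$, which is represented by $H$. Then $H'$ represents $c$ over every $K_v$, since $H'_v\cong H_v$. By Hasse--Minkowski applied to $q_{H'}$, $H'$ represents $c$ over $K$, so $H'\cong\langle c\rangle\perp H_1'$ and $H\cong\langle c\rangle\perp H_1$.

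The complements $H_1$ and $H_1'$ again share dimension $m-1$, signature (by subtracting the sign of $c$) and discriminant (by dividing by $c$). By induction $H_1\cong H_1'$, hence $H\cong H'$. Note that no Witt cancellation is even needed here, because we split off the same vector length on both sides and compare the complements by their invariants.

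\emph{Main obstacle.} The step I expect to need the most care is the local claim at nonsplit finite places, including the dyadic and ramified ones, that binary hermitian forms are universal. My first approach is to translate it via the trace form into isotropy of the quaternary quadratic form $\langle a,-c\rangle\otimes\langle 1,d\rangle$. Its discriminant is a square, so it is isotropic unless it is the norm form of the quaternion division algebra. By choosing the coset of $c$ appropriately I can avoid that case, and this gives representation of both norm cosets.
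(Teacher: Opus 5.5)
The paper gives no proof of this theorem. It cites Landherr and points to Lewis and Jacobson. Jacobson's note reduces the question to quadratic forms: two hermitian forms over a quadratic extension are isometric exactly when their trace forms $q_H(x)=H(x,x)$ are, and Landherr's invariants are then read off the Hasse--Minkowski classification over $\mathbb{Q}$. Your route is different and more self-contained. You use only the representation version of Hasse--Minkowski and induct by splitting off a common value $c$, which avoids Jacobson's trace-form theorem and any bookkeeping with Hasse invariants. The base case, the split and archimedean places, and the global induction step are correct as written, and you are right that no Witt cancellation is needed.

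The gap is the step you flagged, and the justification you give for it does not work. Isotropy of $\langle a,-c\rangle\otimes\langle 1,d\rangle=\langle a,ad,-c,-cd\rangle$ over $\mathbb{Q}_v$ means $a\Nm(x)=c\Nm(y)$ with $x,y\in K_v^\times$, that is, $c\in a\Nm(K_v^\times)$. So it concerns representing $c$ by $\langle a\rangle$ alone and never involves $b$. When $c\notin a\Nm(K_v^\times)$, this form is exactly $a$ times the norm form of the quaternion division algebra $(-d,c/a)_v$. Hence ``choosing the coset of $c$'' only ever recovers the trivial coset. The real content is the case $a\equiv b$ modulo norms: after scaling you must show that $\langle 1,1\rangle$ represents a non-norm, and your argument does not address this.

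The fix is to apply the local theory to the trace form of the binary form itself. The form $\langle a,ad,b,bd\rangle$ is a nondegenerate quaternary form over $\mathbb{Q}_v$, hence universal. Indeed, $\langle a,ad,b,bd,-c\rangle$ has rank $5$ and so is isotropic. If the isotropic vector has last coordinate $0$, the quaternary form is itself isotropic and therefore universal anyway. Equivalently, redirecting your own observation: $\langle a,ad,b,bd\rangle$ has square discriminant, so it is either isotropic or the reduced norm form of the quaternion division algebra over $\mathbb{Q}_v$, and that reduced norm is surjective onto $\mathbb{Q}_v^\times$.

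With this lemma supplied, the proof is complete. Note also that the global step only uses it to conclude that $H'_v$ represents $c$, so the full local classification is more than you actually need.
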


So another way to express \eqref{isom} is to say that $H_E$ has signature $(n,n)$ and the same discriminant as $H$. 
I'll wind up using Landherr's theorem below. 
Although Landherr's paper is very short, it is written in an old-fashioned way without
theorem statements.  
(In other words, Landherr just goes and proves the theorem without ever explicitly stating it.)
So, it might help to point that Landherr's theorem is stated explicitly on page 262 of the 
D.~Lewis' survey article~\cite{lewis82isometry}. 
It is also stated (and even proved) in the examples near the end of
Jacobson's very short article~\cite{jacobson40hermitian}. 

\section{Hermitian forms} 
\label{s.herm}

In the course of proving Proposition~\ref{p.main}, it's going to be useful to
have a little bit of terminology about Hermitian forms over an imaginary quadratic
filed $K=\mathbb{Q}(\sqrt{-d})$.

First, let's call a pair $(X,\varphi_X)$ consisting of a finite-dimensional $K$-vector space 
$X$ and a non-degenerate hermitian form $\varphi_X$ a \emph{hermitian space}.
If $\varphi_X$ is fixed, it's often convenient to refer to the pair just by the 
letter $X$.  
So I'll do that sometimes.

We say two Hermitian spaces $(X,\varphi_X)$ and $(Y,\varphi_Y)$ are \emph{isometric}
if there is a $K$-linear isomorphism $T:X\to Y$ such that, for all $x_1,x_2\in X$,
$\varphi_Y(Tx_1,Tx_2)= \varphi_X(x_1,x_2)$.
Isometry is easily seen to be an equivalence relation 
(the inverse of an isometry
and the composition of isometries are isometries).
We write $X\cong Y$ if $X$ and $Y$ are isometric.

Suppose $X$ and $Y$ are $K$-vector spaces with non-degenerate hermitian forms $\varphi_X$
and $\varphi_Y$ respectively. 
We can put a non-degenerate Hermitian form $\varphi$ on $X\oplus Y$ by the formula
\begin{equation}
  \label{e.perp}
  \varphi((x_1,y_1),(x_2,y_2)) = \varphi_X(x_1,x_2) + \varphi_Y(y_1,y_2).
\end{equation}
This gives a new hermitian space $(X\oplus Y, \varphi)$, which I call 
the \emph{orthogonal direct sum} of $X$ and $Y$ and write as $X\perp Y$ for short. 
Note that $\disc X\perp Y = (\disc X)(\disc Y)$ and, if we write $\sig X\in\mathbb{Z}^2$
for the signature of $X$ (viewed as an ordered pair of integers), then 
$\sig (X\perp Y) = \sig X + \sig Y$. 

On the other hand, suppose $(V,\varphi)$ is a hermitian space and $X\subseteq V$
is a $K$-linear subspace such that $\varphi_X:=\varphi_{|X}$ is a non-degenerate
Hermitian form. 
Set $Y=X^{\perp} := \{v\in V: \varphi(v,x)=0$ for all $x\in X\}$. 
Then it's not hard to see that $\varphi_{|Y}$ is also non-degenerate and $V=X\oplus Y$.
Moreover, the canonical map $X\oplus Y \to V$ induces an isometry from 
$X\perp Y$ to $V$.
In this case, I write $V=X\perp Y$.

For any $a\in\mathbb{Q}\setminus\{0\}$, write $\langle a\rangle$ for the Hermitian form $\varphi$ on $K$ given by 
$\varphi(x,y)=a\bar x y$. 
Then write $\langle a_1,\ldots, a_n\rangle$ for the orthogonal sum  
$\langle a_1\rangle \perp \cdots \perp \langle a_n\rangle$.
Clearly, $\disc \langle a_1,\ldots, a_n\rangle = \prod_{i=1}^n a_i$, 
and $\sig \langle a_1,\ldots, a_n\rangle = (k, n-k)$ where $k=\#\{i: a_i>0\}$. 

Here's a lemma, which is, incidentally, proved (in greater generality) on the first page of Landherr's paper~\cite{landherr1935formen}.

\begin{lemma}
  \label{l.direct} 
  Suppose $X=(X,\varphi)$ is a non-degenerate Hermitian form over $K$ with $0<n=\dim X<\infty$. 
  Then, there exists $a_1,\dots, a_n\in\mathbb{Q}^{\times}$, such that $X$ is isometric to 
  $\langle a_1,\ldots, a_n\rangle$. 
\end{lemma}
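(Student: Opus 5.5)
The plan is to diagonalize $\varphi$ by induction on $n=\dim X$, in the style of Gram--Schmidt. The key input is that a nondegenerate Hermitian form always has an anisotropic vector, i.e.\ some $v\in X$ with $\varphi(v,v)\neq 0$. Note that $\varphi(v,v)=\overline{\varphi(v,v)}$ for every $v$, so $\varphi(v,v)$ lies in the fixed field of conjugation, which is $\mathbb{Q}$. Thus any anisotropic $v$ gives a nonzero rational number $a=\varphi(v,v)$.

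To produce an anisotropic vector, suppose to the contrary that $\varphi(v,v)=0$ for all $v\in X$. Expanding $\varphi(x+y,x+y)=0$ gives $\varphi(x,y)+\varphi(y,x)=0$, that is, $2\,\mathrm{Re}\,\varphi(x,y)=0$, where $\mathrm{Re}\,c=(c+\bar c)/2$. Replacing $y$ by $\sqrt{-d}\,y$ and using sesquilinearity (linear in the second variable, conjugate-linear in the first, as in the convention $\varphi(x,y)=a\bar x y$) shows that $\mathrm{Re}\bigl(\sqrt{-d}\,\varphi(x,y)\bigr)=0$ as well. An element $c\in K$ with $\mathrm{Re}\,c=0$ and $\mathrm{Re}(\sqrt{-d}\,c)=0$ must be $0$, so $\varphi\equiv 0$. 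Since $n>0$, this contradicts nondegeneracy. This polarization step is the only point that needs care, and it is easy; the characteristic is $0$, so dividing by $2$ is harmless.

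Now take $v_1$ with $a_1:=\varphi(v_1,v_1)\in\mathbb{Q}^\times$, and set $L=Kv_1$. The restriction $\varphi_{|L}$ is the form $\langle a_1\rangle$ via $c\mapsto cv_1$, since $\varphi(cv_1,c'v_1)=a_1\bar c c'$. This form is nondegenerate. By the discussion preceding the lemma, $X=L\perp L^{\perp}$, and $\varphi_{|L^\perp}$ is nondegenerate of dimension $n-1$. If $n=1$ we are done. Otherwise, by induction $L^\perp\cong\langle a_2,\dots,a_n\rangle$, so
\[
X\cong\langle a_1\rangle\perp\langle a_2,\ldots,a_n\rangle=\langle a_1,\ldots,a_n\rangle,
\]
because orthogonal sums of isometries are isometries.
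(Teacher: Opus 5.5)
Your proof is correct and is essentially the paper's: find an anisotropic vector, note its value lies in $\mathbb{Q}^{\times}$, split off $Kv_1\perp (Kv_1)^{\perp}$, and induct on dimension. The only cosmetic difference is how you get the anisotropic vector. The paper picks $y$ with $\varphi(x,y)=1$ and observes $\varphi(x+y,x+y)=2$, whereas you polarize with $y$ and $\sqrt{-d}\,y$ to force $\varphi\equiv 0$; both arguments are valid.
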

\begin{proof}
   We claim first that we can find $x\in X$ with $\varphi(x,x)\neq 0$.
   To see this, suppose, to get a contradiction, that $\varphi(x,x)=0$ for all $x\in X$.
   Pick a nonzero $x\in X$. 
   Since $\varphi$ is non-degenerate, we can find $y\in X$ with $\varphi(x,y)=1$.
   But then $\varphi(x+y,x+y) =2 \neq 0$. 
   So we arrive at a contradiction.

   Now the lemma is proved by induction on dimension.
   Pick a nonzero $x\in X$ with $a:=\varphi(x,x)\neq 0$.
   (Since $\varphi(x,x)=\overline{\varphi(x,x)}$, $a\in\mathbb{Q}$.)
   Then $X=\langle a\rangle \perp Y$ with $Y=Kx^{\perp}$ and with $\varphi_{|Y}$ non-degenerate.
   Then, by induction, we're done.
\end{proof}

\begin{definition}
  \label{d.hyper} The \emph{hyperbolic plane} is the Hermitian form on $K^2$ given by  $\mathbb{H} := \langle 1, -1\rangle$.  
  For each natural number $k$, let $\mathbb{H}^k$ denote the $k$-fold orthogonal direct sum 
  $\mathbb{H}\perp \cdots \perp\mathbb{H}$ of $\mathbb{H}$.
\end{definition}

\begin{lemma}
  \label{l.class}
  Suppose $\varphi$ is a Hermitian form on $V=K^{2n}$ of signature $(n,n)$.
  Then $V\cong \mathbb{H}^{n-1}\perp X$ for some $X\cong K^2$ with signature $(1,1)$.
  Moreover, we have $\disc V = (-1)^n$ if and only if $V\cong \mathbb{H}^n$. 
\end{lemma}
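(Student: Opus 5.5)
The plan is to diagonalize using Lemma~\ref{l.direct}, peel off hyperbolic planes one at a time by induction on $n$, and then settle the second statement with discriminants and Landherr's theorem.

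\emph{Diagonal form.} By Lemma~\ref{l.direct}, write $V\cong\langle a_1,\dots,a_{2n}\rangle$ with $a_i\in\mathbb{Q}^\times$. Since the signature is $(n,n)$, exactly $n$ of the $a_i$ are positive and $n$ are negative. Reordering, I may pair them up as $V\cong P_1\perp\cdots\perp P_n$ with $P_i=\langle b_i,-c_i\rangle$ and $b_i,c_i>0$. So each $P_i$ is a binary form of signature $(1,1)$.

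\emph{Key step: each $P_i$ merges into its neighbour.} I claim that for $n\ge 2$, $P_1\perp P_2\cong\mathbb{H}\perp X'$ for some binary $X'$ of signature $(1,1)$. It suffices to find an isotropic vector $v$ in the $4$-dimensional indefinite form $Q=P_1\perp P_2$. Indeed, nondegeneracy then gives $w$ with $\varphi(v,w)=1$. Replacing $w$ by $w+tv$ with $t\in\mathbb{Q}$ makes $w$ isotropic, since $\varphi(w+tv,w+tv)=\varphi(w,w)+2t$. Then $v-w/2$ and $v+w/2$ span a copy of $\langle 1,-1\rangle=\mathbb{H}$, and its orthogonal complement $X'$ has signature $(1,1)$ by additivity of signatures.

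For isotropy, view $Q$ as a quadratic form over $\mathbb{Q}$ via the trace. It has dimension $8$: $x\mapsto \varphi(x,x)$ on $K^4$ is the form $b_1\Nm\perp -c_1\Nm\perp b_2\Nm\perp -c_2\Nm$, with $\Nm$ the binary norm form. It is indefinite over $\mathbb{R}$, so by Hasse--Minkowski (any indefinite rational quadratic form of dimension $\ge5$ is isotropic) it has a nonzero zero. A Hermitian-only alternative is to apply Landherr's theorem to the sum and compare with $\mathbb{H}\perp\langle b_1b_2,-c_1c_2\rangle$, which has the same dimension, signature and discriminant. I expect this isotropy step to be the main obstacle, and I will use the Landherr comparison, since Landherr's theorem (Theorem~\ref{t.landherr}) is already available in the paper. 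Inducting on $n$, repeatedly splitting off $\mathbb{H}$ from the last two summands, gives $V\cong\mathbb{H}^{n-1}\perp X$ with $X$ binary of signature $(1,1)$. This proves the first claim.

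\emph{The discriminant criterion.} Since $\disc\mathbb{H}=-1$, we get $\disc V=(-1)^{n-1}\disc X$. If $V\cong\mathbb{H}^n$, then $\disc V=(-1)^n$ by multiplicativity. Conversely, if $\disc V=(-1)^n$, then $\disc X=-1$. Now $X$ and $\mathbb{H}$ are both $2$-dimensional, both have signature $(1,1)$, and both have discriminant $-1$, so $X\cong\mathbb{H}$ by Landherr's theorem (Theorem~\ref{t.landherr}). Hence $V\cong\mathbb{H}^n$.

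In fact, Landherr's theorem gives the second claim directly: $V$ and $\mathbb{H}^n$ share dimension and signature, so they are isometric if and only if their discriminants agree. I will nevertheless present the argument through the decomposition $V\cong\mathbb{H}^{n-1}\perp X$, since that decomposition is what the first statement asks for.
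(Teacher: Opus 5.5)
Your proof is correct, but it handles the first claim by a different route from the paper.

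\textbf{The paper's route.} The paper diagonalizes $V\cong\langle a_1,\dots,a_n,-b_1,\dots,-b_n\rangle$ and applies Landherr's theorem once, globally. With $a=\prod a_i$, $b=\prod b_i$ and $X=\langle a,-b\rangle$, both $V$ and $\mathbb{H}^{n-1}\perp X$ have dimension $2n$, signature $(n,n)$ and discriminant $(-1)^n ab$, so they are isometric. The second claim is then exactly the direct Landherr comparison of $V$ with $\mathbb{H}^n$ that you mention at the end.

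\textbf{Your route.} You instead split off hyperbolic planes one at a time from $4$-dimensional pieces and induct. If you run the key step through Landherr's theorem, as you say you will, the isotropic-vector construction and the induction do no real work. The same comparison you make for $P_1\perp P_2$, done with $\langle\prod b_i,-\prod c_i\rangle$ in place of $\langle b_1b_2,-c_1c_2\rangle$, finishes in one step, and that is the paper's proof.

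\textbf{What your route buys.} The Hasse--Minkowski (Meyer) option proves the first claim without Landherr at all. It also isolates why the decomposition holds: every indefinite Hermitian form over $K$ of dimension at least $3$ is isotropic, because $x\mapsto\varphi(x,x)$ is an indefinite rational quadratic form of dimension at least $6$. Combine this with the hyperbolic splitting of Lemma~\ref{l.iso}, which you reprove.

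You can even make the whole lemma Landherr-free. Suppose $X=\langle a,-b\rangle$ has $\disc X=-1$. Then $ab=\Nm(\gamma)$ for some $\gamma\in K^\times$, so $a/b=\Nm(\gamma/b)$. Hence $(1,\gamma/b)$ is isotropic in $X$, and $X\cong\mathbb{H}$ by Lemma~\ref{l.iso}.
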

\begin{proof}
   Since $\sig V=(n,n)$, we can write find positive rational numbers
   $a_1$, $\ldots$, $a_n$, $b_1$, $\ldots$, $b_n$  such that $V\cong \langle a_1,\ldots,
   a_n, -b_1,\ldots, -b_n\rangle$.
   Set $a=\prod a_i$ and $b=\prod b_i$.
   Then $\disc V = (-1)^n ab$.
   Set $X=\langle a, -b\rangle$. 
   Then $\disc \mathbb{H}^{n-1}\perp X = (-1)^{n-1} \disc X = (-1)^{n-1}(-ab)= (-1)^n ab = \disc V$.
   So $V$ and $\mathbb{H}^{n-1}\perp X$ have the same dimension, discriminant and signature.
   By Landherr (Theorem~\ref{t.landherr}), it follows that they are isometric.

   On the other hand, if $\disc V = (-1)^n$, then $V$ and $\mathbb{H}^{n}$ have the same dimension, discriminant and signature.
   So, by Landherr, $V\cong\mathbb{H}^n$.
   This proves the last statement of the lemma.
\end{proof}

We want to introduce one more concept that will help to streamline  the proof
of Proposition~\ref{p.main} below. 
If $(V,\varphi)$ is a hermitian space, then I say that a non-zero vector $w\in V$ is \emph{isotropic}
if $\varphi(w,w)=0$. 
For example, the vector $(1,e^{2\pi i \alpha})$ is isotropic in the hyperbolic plane $\mathbb{H}$ for 
any $\alpha\in\mathbb{R}$.
On the other hand, we have the following lemma (which is really the basis of Witt's theory in the case of quadratic forms).

\begin{lemma}
\label{l.iso}
   Suppose $(V,\varphi)$ is a non-degenerate hermitian space. 
   If $w\in V$ is a nonzero isotropic vector, then $V=X\perp X^{\perp}$ where $X$ is a $2$-dimensional subspace of $V$
   isometric to the hyperbolic plane $\mathbb{H}$ containing $w$.  
\end{lemma}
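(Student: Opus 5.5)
The plan is the standard Witt-style argument: first find a vector pairing to $1$ with $w$, then correct it to an isotropic one so that together with $w$ it spans a copy of $\mathbb{H}$, and finally split off its orthogonal complement.

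\emph{Choosing a partner $y$.} Since $\varphi$ is non-degenerate and $w\neq 0$, there is a $y\in V$ with $\varphi(w,y)=1$. Then $\varphi(y,w)=\overline{\varphi(w,y)}=1$ as well. Put $c:=\varphi(y,y)$, which lies in $\mathbb{Q}$ because $\varphi(y,y)$ equals its own conjugate.

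\emph{Making the partner isotropic.} Set $y':=y-\tfrac{c}{2}w$. Using that $w$ is isotropic and $c$ is rational,
\[
\varphi(y',y') = c - \tfrac{c}{2}\varphi(w,y) - \tfrac{c}{2}\varphi(y,w) = c - c = 0,
\]
and $\varphi(w,y')=1$. Let $X=Kw+Ky'$. The vectors $w,y'$ are linearly independent: if $y'=\lambda w$, then $\varphi(w,y')=\lambda\varphi(w,w)=0$, contradicting $\varphi(w,y')=1$. So $\dim X=2$, and in the basis $w,y'$ the Gram matrix of $\varphi_{|X}$ is $\begin{pmatrix}0&1\\1&0\end{pmatrix}$. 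This matrix has determinant $-1\neq 0$, so $\varphi_{|X}$ is non-degenerate.

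\emph{Identifying $X$ with $\mathbb{H}$.} Consider $e_1=w+\tfrac12 y'$ and $e_2=w-\tfrac12 y'$. These satisfy $\varphi(e_1,e_1)=1$, $\varphi(e_2,e_2)=-1$, and $\varphi(e_1,e_2)=\tfrac12-\tfrac12=0$. Hence $X\cong\langle 1,-1\rangle=\mathbb{H}$ directly, with no need for Landherr's theorem. Alternatively, one could note that $X$ has discriminant $-1$ and signature $(1,1)$ and then apply Theorem~\ref{t.landherr}.

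\emph{Splitting.} Since $\varphi_{|X}$ is non-degenerate, the general fact recalled earlier in \S\ref{s.herm} gives $V=X\perp X^{\perp}$ with $X^{\perp}$ non-degenerate, and by construction $w\in X$.

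The only step needing care is choosing the correction coefficient $-\tfrac{c}{2}$. This works because $\varphi(w,y)$ was normalized to $1$ and is therefore real; without that normalization the cross terms would not cancel as cleanly. The rest is bookkeeping.
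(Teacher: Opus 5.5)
Your proof is correct and follows essentially the same route as the paper: choose a partner vector pairing nontrivially with $w$, subtract a rational multiple of $w$ to make it isotropic, then take sum and difference to exhibit $\langle 1,-1\rangle$. The only differences are cosmetic. The paper normalizes $\varphi(u,w)=1/2$, where you normalize to $1$ and rescale by $\tfrac12$ afterwards. You also spell out the linear independence of $w,y'$ and the final splitting $V=X\perp X^{\perp}$, which the paper leaves implicit.
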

\begin{proof}
   Since $\varphi$ is non-degenerate, we can find $u\in V$ such that $\varphi(u,w)=1/2$.
   Let $X$ denote the $K$-span of $u$ and $w$.
   Setting, $v=u-(u,u)w$, we get that $\varphi(v,v) = (u,u) - 2 (u,u)(u,w) = 0$.
   So then $\varphi(v,v)=\varphi(w,w)=0$ and $\varphi(v,w)=1/2$.
   Now, set $x=v+w$, $y=v-w$.
   We get that $\varphi(x,x)=1$, $\varphi(y,y)=-1$ and $\varphi(x,y)=0$.
   So $X\cong\mathbb{H}$.
\end{proof}

\section{Proof of Proposition~\ref{p.main}} 
\label{s.proof}

\begin{proof}[Proof of Proposition~\ref{p.main}]
\eqref{degen}$\Rightarrow$\eqref{lag}:  The main point here is that, if $A\to B$ is a maximal degeneration, and, if we identify $V$
with $\coh_1(A_b,\mathbb{Q})$ for some $b\in B$, then 
the monodromy logarithm $N$ is an $K$-linear endomorphism of $V$ lying in the Lie algebra $\su_H$ of $\SU_H$. 
As such, we have $H(Nx,y) + H(x,Ny)=0$ for all $x,y\in V$.
(This follows from the fact that $e^{tN}$ lies in $\SU_H$ for $t\in\mathbb{R}$.) 
So set $W=N(V)$. 
By assumption, $N$ has rank $n$ as a $K$-linear endomorphism of $V$.
Moreover, as I've already pointed out, $N^2=0$ by the Monodromy Theorem~\cite[Theorem 6.1]{schmid73}.
So, for $Nx, Ny\in W$, we have 
\begin{align*}
  H(Nx,Ny) &= H(Nx,Ny) - 0\\ 
           &= H(Nx,Ny) - [H(Nx,Ny)+H(x,N^2y)] \\
           &= -H(x,N^2y)=0.
\end{align*}
\medskip

\eqref{lag}$\Rightarrow$\eqref{disc}: 
Suppose $W\subseteq V$ is a lagrangian subspace and pick a nonzero $w\in W$. 
By Lemma~\ref{l.iso}, we have $V=X\perp Y$, where $X$ is a non-degenerate hermitian
subspace of $V$ isometric to $\mathbb{H}$ with $w\in X$.

We then have $W\cap Y=W\cap X^{\perp} = W\cap v^{\perp}$, where $v$ is any vector in $X$
not in $Kw$.
So $\dim Y\cap W=\dim W-1$. 
So, since $H_{|Y}$ is non-degenerate, $Y$ satisfies the conditions of \eqref{lag}.
Thus, by induction, the discriminant of $H_{|Y}$ is $(-1)^{n-1}$. 
On the other hand, $X\cong \mathbb{H}$, so $\disc X=-1$. 
So $\disc V = (\disc X)(\disc Y) = (-1)(-1)^{n-1} = (-1)^n$ as desired.

Similarly, the signature of $X$ is $(1,1)$. 
On the other hand, by induction on the dimension, the signature of $Y$ 
is $(n-1,n-1)$. 
So, since $V=X\perp Y$ it follows easily that the signature of $V$ is $(n,n)$.
\medskip

\eqref{disc}$\Rightarrow$ \eqref{lag}: By Landherr's theorem
(Theorem~\ref{t.landherr}), the only invariants of Hermitian forms over $K$
are the dimension, the signature and the discriminant. 
So, suppose $H$ has discriminant $(-1)^n$ on $V$ and signature $(n,n)$ .
Then $H$ has the same discriminant, dimension and signature as the 
hermitian form $\varphi$ on $K^{2n}$ with standard basis

\eqref{disc}$\Rightarrow$\eqref{degen}: 
Suppose $\disc H = (-1)^n$.  
Then, by Lemma~\ref{l.class}, $H$ is isometric to $\mathbb{H}^n$.
Pick an integer $N\geq 3$, and let 
Let $X = X_0(N)$ denote the modular curve parametrizing
elliptic curves with full level-$N$ structure.
Since $N\geq 3$, there is a universal family $E\to X$.
Moreover, the family has maximal unipotent degeneration.
\end{proof}

\section{Conclusion}
\label{s.conclusion} By Proposition~\ref{p.main}, there is no maximally degenerating family of Weil type abelian varieties
of dimension $2n$ when the discriminant of the hermitian form $H$ is not $(-1)^n$. 
So, taking $n=2$, we see that, to get a maximally degenerating family of Weil type abelian $4$-folds, the 
discriminant has to be $1$. 
Then, as I said above, since Markman has proved the Hodge conjecture for Weil type cycles in dimension $4$ and 
discriminant $1$, the Kontsevich method (as explained by Zharkov in~\cite{zharkov2020tropical}) can't work to disprove the Hodge conjecture for dimension $4$ Weil type 
abelian varieties with maximal degeneration. 

On the other hand, if you extrapolate from Proposition~\ref{p.main} using Lemma~\ref{l.class}, you can guess that it is 
always possible (even when the discriminant is $\neq 1$) to get a degenerating family of $2n$-dimensional Weil type
abelian varieties with associated tropical abelian variety of dimension $n-1$. 
Perhaps something useful could be done with that family. 

\bibliographystyle{plain}

\begin{thebibliography}{10}

\bibitem{borarg}
Armand Borel.
\newblock {\em Introduction aux groupes arithm\'{e}tiques}.
\newblock Publications de l'Institut de Math\'{e}matique de l'Universit\'{e} de Strasbourg, XV. Actualit\'{e}s Scientifiques et Industrielles, No. 1341. Hermann, Paris, 1969.

\bibitem{deligneVarietesShimuraInterpretation1979}
Pierre Deligne.
\newblock Variétés de {Shimura}: interprétation modulaire, et techniques de construction de modèles canoniques.
\newblock In {\em Automorphic forms, representations and {L}-functions}, Proc. {Sympos}. {Pure} {Math}., {XXXIII}, pages 247--289. Amer. Math. Soc., Providence, R.I., 1979.

\bibitem{jacobson40hermitian}
N.~Jacobson.
\newblock A note on hermitian forms.
\newblock {\em Bull. Amer. Math. Soc.}, 46:264--268, 1940.

\bibitem{landherr1935formen}
Walther Landherr.
\newblock \"{A}quivalenz {H}ermitescher {F}ormen \"{u}ber einem beliebigen algebraischen {Z}ahlk\"{o}rper.
\newblock {\em Abh. Math. Sem. Univ. Hamburg}, 11(1):245--248, 1935.

\bibitem{lewis82isometry}
D.~W. Lewis.
\newblock The isometry classification of {H}ermitian forms over division algebras.
\newblock {\em Linear Algebra Appl.}, 43:245--272, 1982.

\bibitem{markmanweilhodge}
Eyal Markman.
\newblock The monodromy of generalized {K}ummer varieties and algebraic cycles on their intermediate {J}acobians.
\newblock {\em J. Eur. Math. Soc. (JEMS)}, 25(1):231--321, 2023.

\bibitem{markmanCyclesAbelian2nfolds2025}
Eyal Markman.
\newblock Cycles on abelian 2n-folds of {Weil} type from secant sheaves on abelian n-folds, February 2025.
\newblock arXiv:2502.03415 [math].

\bibitem{schmid73}
Wilfried Schmid.
\newblock Variation of {H}odge structure: the singularities of the period mapping.
\newblock {\em Invent. Math.}, 22:211--319, 1973.

\bibitem{vanGeemen94hodge}
Bert van Geemen.
\newblock An introduction to the {H}odge conjecture for abelian varieties.
\newblock In {\em Algebraic cycles and {H}odge theory ({T}orino, 1993)}, volume 1594 of {\em Lecture Notes in Math.}, pages 233--252. Springer, Berlin, 1994.

\bibitem{vangeemen2022fourfolds}
Bert van Geemen.
\newblock Fourfolds of {W}eil type and the spinor map.
\newblock {\em Expo. Math.}, 41(2):418--447, 2023.

\bibitem{zharkov2020tropical}
Ilia Zharkov.
\newblock Tropical abelian varieties, {W}eil classes and the {H}odge conjecture, 2020.
\newblock arxiv:2002.02347 [math].

\end{thebibliography}
\def\cprime{$'$} \def\noopsort#1{}

\end{document}